\newtheorem{theorem}{Theorem}
\newtheorem{lemma}{Lemma}
\newtheorem{exmp}{Example}
\newtheorem{cor}{Corollary}
\begin{document}
\title{Orthogonality preserving transformations of Hilbert Grassmannians}
\author{Mark Pankov}
\subjclass[2000]{47N50, 81P10, 81R15}

\keywords{complex Hilbert space, Hilbert Grassmannian, orthogonality preserving transformations, 
Wigner-type theorem}
\address{Faculty of Mathematics and Computer Science, 
University of Warmia and Mazury, S{\l}oneczna 54, Olsztyn, Poland}
\email{pankov@matman.uwm.edu.pl}

\maketitle

\begin{abstract}
Let $H$ be a complex Hilbert space and let ${\mathcal G}_{k}(H)$ be the Grassmannian formed by
$k$-dimensional subspaces of $H$. 
Suppose that $\dim H>2k$ and $f$ is an orthogonality preserving injective transformation of ${\mathcal G}_{k}(H)$,
i.e. for any orthogonal $X,Y\in {\mathcal G}_{k}(H)$ the images $f(X),f(Y)$ are orthogonal. 
Furthermore, if $\dim H=n$ is finite, then $n=mk+i$ for some integers $m\ge 2$ and $i\in \{0,1,\dots,k-1\}$
(for $i=0$ we have $m\ge 3$).
We show that $f$ is a bijection induced by a unitary or anti-unitary operator 
if $i\in \{0,1,2,3\}$ or $m\ge i+1\ge 5$; in particular,
the statement holds for $k\in \{1,2,3,4\}$ and, if $k\ge 5$,
then there are precisely $(k-4)(k-3)/2$ values of $n$ such that the above condition is not satisfied.
As an application, we obtain a result concerning the case when $H$ is infinite-dimensional.
\end{abstract}

\section{Introduction}
By Gleason's theorem \cite{Gleason},  
the set of pure states of a quantum mechanical system can be identified with
the set of rank-one projections, i.e. the set of rays in a complex Hilbert space. 
Wigner's theorem \cite{Wigner} describes symmetries of quantum mechanical systems,
it  states that every transformation of the set of pure states preserving the transition probability 
(the trace of the composition of two projections or, equivalently, the angle between two rays) 
is induced by a linear or conjugate-linear isometry. 
Various kinds of Wigner-type theorems can be found, for example, in \cite{Pankov-book};
some of them are formulated in terms of orthogonality preserving transformations.

Let $H$ be a complex Hilbert space. 
For every positive integer $k<\dim H$ we denote by ${\mathcal G}_{k}(H)$ the Grassmannian formed by 
$k$-dimensional subspaces of $H$.
This Grassmannian can be naturally identified with the set of rank-$k$ projections. 
In the case when $\dim H\ge 2k$, two $k$-dimensional subspaces are orthogonal 
if and only if the composition of the corresponding projections is zero.

Suppose that $\dim H\ge 3$.
Then the bijective version of Wigner's theorem is a consequence of the following Uhlhorn's  observation \cite{Uhlhorn}:
every bijective transformation of ${\mathcal G}_{1}(H)$
preserving the orthogonality relation in both directions is induced by a unitary or anti-unitary operator.
In fact, the latter statement is a reformulation of the Fundamental Theorem of Projective Geometry
(see \cite[Proposition 4.8]{Pankov-book} or \cite[Theorem 4.29]{Var}).

Uhlhorn's  observation was extended on other Grassmannians by Gy\H{o}ry \cite{Gyory} and \v{S}emrl \cite{Semrl}: 
if $\dim H>2k$, then every bijective transformation of ${\mathcal G}_{k}(H)$
preserving the orthogonality relation in both directions is induced by a unitary or anti-unitary operator.
 A simple example shows that the statement fails for $\dim H=2k$. 
If $H$ is infinite-dimensional, then the same holds for orthogonality preserving (in both directions)
bijective transformations of the Grassmannian formed by subspaces whose dimension and codimension both are infinite
\cite{Semrl}.
Gy\H{o}ry--\v{S}emrl's theorem is used to study transformations preserving the gap metric \cite{GS}
and commutativity preserving transformations \cite{Pankov-c1,Pankov-c2}. 
The assumption of surjectivity cannot be omitted.
It was noted in \cite{Semrl} that for the case when $H$ is infinite-dimensional
there are non-surjective transformations of ${\mathcal G}_{k}(H)$ which are not induced by linear or conjugate-linear isometries 
and preserve the orthogonality relation in both directions.  
If $H$ is finite-dimensional and $\dim H>2k$, then  such transformations do not exist, i.e. 
every transformation of ${\mathcal G}_{k}(H)$ preserving the orthogonality relation in both directions
is a bijection induced by a unitary or anti-unitary operator \cite{Pankov1}.

In this note, we obtain analogues of the above mentioned results for injective transformations
under the assumption that the orthogonality relation is preserved exactly in one direction.

\section{Results}
We start from the case when $H$ is finite-dimensional.
Let $k$ be a positive integer such that $\dim H>2k$.
Then $$\dim H=mk+i$$ for some integers $m\ge 2$ and $i\in \{0,1,\dots,k-1\}$.
Note that $m\ge 3$ if $i=0$.

\begin{theorem}\label{theorem1}
Suppose that one of the following conditions is satisfied:
\begin{enumerate}
\item[$\bullet$] $i\in \{0,1,2,3\}${\rm;}
\item[$\bullet$] $i\ge 4$ and $m\ge i+1$.
\end{enumerate}
Then every injective transformation $f$ of ${\mathcal G}_{k}(H)$ preserving the orthogonality relation,
i.e for any orthogonal $X,Y\in {\mathcal G}_{k}(H)$ the images $f(X),f(Y)$ are orthogonal,
is a bijection induced by a unitary or anti-unitary operator on $H$.
\end{theorem}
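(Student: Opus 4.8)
The plan is to deduce that $f$ preserves the orthogonality relation in both directions and then invoke the cited result of \cite{Pankov1}, according to which any such transformation of ${\mathcal G}_{k}(H)$ (for $H$ finite-dimensional with $\dim H>2k$) is automatically a bijection induced by a unitary or anti-unitary operator. Thus the entire difficulty is to upgrade one-directional orthogonality preservation, together with injectivity, to the reverse implication $f(X)\perp f(Y)\Rightarrow X\perp Y$.

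First I would extract a purely combinatorial consequence of the hypotheses. For distinct $X,Y\in{\mathcal G}_{k}(H)$ the set of common orthogonal neighbors is $\{Z:Z\perp X,\ Z\perp Y\}={\mathcal G}_{k}\bigl((X+Y)^{\perp}\bigr)$, and the largest family of mutually orthogonal members it contains has cardinality $\lfloor(n-\dim(X+Y))/k\rfloor$. Since $f$ is injective and orthogonality-preserving, it sends common orthogonal neighbors of $\{X,Y\}$ injectively to common orthogonal neighbors of $\{f(X),f(Y)\}$, carrying a mutually orthogonal family to one of the same size; hence this maximal size cannot decrease. Writing $n=mk+i$ and noting $\dim(X+Y)\in\{k+1,\dots,2k\}$, that size equals $m-1$ exactly when $\dim(X+Y)\le k+i$ and equals $m-2$ otherwise. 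Therefore $\dim(X\cap Y)\ge k-i$ implies $\dim(f(X)\cap f(Y))\ge k-i$; in other words $f$ is an injective homomorphism of the graph $R_i$ on ${\mathcal G}_{k}(H)$ whose edges join subspaces at Grassmann distance at most $i$.

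The main obstacle is to pass from this coarse relation to genuine adjacency, i.e.\ to show that $\dim(X\cap Y)=k-1$ forces $\dim(f(X)\cap f(Y))=k-1$. I would do this by analysing the maximal cliques of $R_i$ --- the stars $\{Z\in{\mathcal G}_k(H):A\subseteq Z\}$ over a fixed $(k-i)$-dimensional $A$ and the tops $\{Z\in{\mathcal G}_k(H):Z\subseteq B\}$ over a fixed $(k+i)$-dimensional $B$, together with their intermediate analogues --- computing their sizes and showing that $f$ must respect their types, so that intersections of cliques recover first the smaller-distance relations and finally adjacency. It is precisely here that the numerical hypotheses enter: the conditions $i\le 3$, respectively $m\ge i+1$, are what guarantee that these clique types are distinguishable by size and that enough independent ``blocks'' are available for the recovery (for large $i$ one expects an induction on $m$ that consumes one block at a time, which is why $m$ must dominate $i$), the excluded pairs being exactly those for which the combinatorics become ambiguous.

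Once adjacency is preserved, a Chow-type theorem for injective adjacency-preserving maps of ${\mathcal G}_k(H)$ applies; since $\dim H>2k$ rules out the dual alternative, $f$ is induced by a semilinear injection $g$ of $H$. Finally I would feed the orthogonality hypothesis back in: using that $\dim H>2k$ leaves room to place any orthogonal pair of vectors inside a pair of orthogonal $k$-dimensional subspaces, $g$ preserves orthogonality of vectors, hence is proportional to a $\sigma$-unitary map; over ${\mathbb C}$ this forces the field automorphism $\sigma$ to be the identity or complex conjugation and $g$ to be a positive scalar multiple of a unitary or anti-unitary operator. Such $g$ is automatically surjective in finite dimension, so $f$ is the induced bijection, completing the proof. (Equivalently, preservation of adjacency already yields both-directional orthogonality preservation, after which \cite{Pankov1} finishes directly.)
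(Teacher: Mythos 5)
Your proposal has the right skeleton---establish adjacency preservation (or two-sided orthogonality preservation) and then quote \cite{Pankov1}---and your opening observation is correct and close to what the paper actually uses: injectivity plus one-sided orthogonality preservation implies that a maximal family of mutually orthogonal common neighbours of a pair cannot shrink, hence $\dim(X\cap Y)\ge k-i$ forces $\dim(f(X)\cap f(Y))\ge k-i$ (this same count is how the paper bounds $\dim f'(U)\le k+i$). But the heart of the matter---passing from this coarse relation to genuine adjacency preservation---is only gestured at, and the sketch you give would not work as written. Over $\mathbb{C}$ every clique of your graph $R_i$ is infinite of the same cardinality, so clique ``types'' cannot be distinguished ``by size''; more fundamentally, since $f$ is not assumed surjective, the image of a maximal clique need not be a maximal clique, which is exactly the obstruction that makes Chow/Lim-type clique arguments fail for injections (Lim's result \cite{Lim} requires surjectivity and preservation in both directions). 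The paper's actual argument is where all the work lies and is quite different: for $i\in\{1,2,3\}$ it reduces to $m=2$, introduces the map $f'$ sending each subspace $U$ with $k<\dim U\le k+i$ to the span of all $f(Y)$ with $Y\subset U$, and pins down $\dim f'(U)$ via orthogonal complementation and a step-by-step analysis of $'$-adjacency along geodesics; for $i\ge 4$ it runs a recursion, using $m\ge i+1$ to cut the problem down to a $(4k+3)$-dimensional space, i.e.\ to the $i=3$ case. Nothing in your clique plan substitutes for this, and your claim that the numerical hypotheses are ``exactly'' the threshold of combinatorial ambiguity is asserted, not proved.

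There are two further concrete problems. First, your framework says nothing about $i=0$: there the relation $R_0$ is just equality, so your first step carries no information; the paper treats $i=0$ by a separate direct argument (if $f(X)\perp f(Y)$, complete $f(X),f(Y)$ by images of mutually orthogonal $Z_1,\dots,Z_{m-2}\subset X^{\perp}\cap Y^{\perp}$ to an orthogonal decomposition of $H$, and use injectivity to force $Y=(X+Z_1+\dots+Z_{m-2})^{\perp}$), which yields two-sided preservation and lets Theorem \ref{P2} apply. Second, the ``Chow-type theorem for injective adjacency-preserving maps'' you invoke at the end does not exist: any injection whose image lies in a single star (all $k$-dimensional subspaces containing a fixed $(k-1)$-dimensional one) preserves adjacency and is not induced by a semilinear map. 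What is true, and what the paper uses as Theorem \ref{P1}, is that adjacency preservation \emph{together with} orthogonality preservation forces $f$ to be induced by a linear or conjugate-linear isometry; your closing parenthetical remark points at this, but it is the only valid route, not an equivalent alternative to a nonexistent theorem.
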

Theorem \ref{theorem1} shows that 
every injective transformation of ${\mathcal G}_{k}(H)$ preserving the orthogonality relation
is a bijection induced by a unitary or anti-unitary operator if 
\begin{enumerate}
\item[$\bullet$] $k\in \{1,2,3,4\}$ or
\item[$\bullet$] $k\ge 5$ and $\dim H$ is sufficiently large, for example, 
if $\dim H\ge (k-2)k$.
\end{enumerate}
For $k\ge 5$ an easy calculation shows that 
there are precisely 
$$(k-4)+(k-5)+\dots+1=(k-3)(k-4)/2$$ values of $\dim H$ such that the condition of Theorem \ref{theorem1}
does not hold.

If $k=1$, then  Theorem \ref{theorem1} is a simple consequence of
the Fundamental Theorem of Projective Geometry \cite[Remark 49]{Pankov-book}.

Now, we suppose that  $H$ is infinite-dimensional
and consider an example of orthogonality preserving (in both directions)
transformation of ${\mathcal G}_{k}(H)$ which is not induced by a linear or conjugate-linear isometry.

\begin{exmp}[\cite{Semrl}]\label{exmp1}{\rm
Let be $U:H\to H$ be a linear isometry whose image is a proper subspace of $H$.
Then the adjoint operator $U^*$ is surjective and its kernel is the orthogonal complement of the image of $U$. 
We fix $X\in {\mathcal G}_{k}(H)$. 
Since $U^{*}U$ is identity, there is $X'\in {\mathcal G}_{k}(H)$
which is not contained in the image of $U$ and such that $U^{*}(X') = X$.
Denote by $f$ the transformation of ${\mathcal G}_{k}(H)$ 
which sends $X$ to $X'$ and every $Y\in {\mathcal G}_{k}(H)\setminus \{X\}$ to $U(Y)$.
We obtain  an injection preserving the orthogonality relation in both directions
which is not induced by a linear or conjugate-linear isometry.
Let $H'$ be the smallest closed subspace of $H$ containing all elements from the image of $f$.
We take any $Y\in {\mathcal G}_{k}(H)\setminus \{X\}$ non-orthogonal to $X$
and any family $\{Z_{j}\}_{j\in J}$ of mutually orthogonal elements of ${\mathcal G}_{k}(H)$ whose sum is $Y^{\perp}$.
Then $f(Y)=U(Y)$ and $f(Z_j)=U(Z_j)$ for every $j\in J$.
Therefore, $U(H)$ is the orthogonal sum of $f(Y)$ and all $f(Z_j)$.
Note that $U(H)$ is a proper subspace of $H'$, since it does not contain $f(X)=X'$.
}\end{exmp}

\begin{cor}\label{cor}
Let $f$ be an injective transformation of ${\mathcal G}_{k}(H)$
and let $H'$ be the smallest closed subspace of $H$ containing all elements from the image of $f$.
Suppose that $f$ satisfies the following condition:
\begin{enumerate}
\item[(*)] for any orthogonal $X,Y\in {\mathcal G}_{k}(H)$ there  is a family $\{Z_{j}\}_{j\in J}$ 
of mutually orthogonal elements of ${\mathcal G}_{k}(H)$ such that each $Z_j$ is orthogonal to $X+Y$
and $H'$ is the orthogonal sum of $f(X),f(Y)$ and all $f(Z_j)$.
\end{enumerate}
Then $f$ is induced by a linear or conjugate-linear isometry.
\end{cor}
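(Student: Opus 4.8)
The plan is to deduce the statement from Theorem \ref{theorem1} by restricting $f$ to a carefully chosen increasing chain of finite-dimensional subspaces, applying the finite-dimensional result to each restriction, and gluing the resulting operators into a single isometry. The first observation is that condition (*) already forces $f$ to preserve the orthogonality relation: if $X,Y\in{\mathcal G}_{k}(H)$ are orthogonal, then the orthogonal decomposition of $H'$ supplied by (*) gives in particular $f(X)\perp f(Y)$. I would then fix orthogonal $X_{0},Y_{0}\in{\mathcal G}_{k}(H)$ together with a family $\{Z_{j}\}_{j\in J}$ witnessing (*), and set $\widehat{H}:=X_{0}\oplus Y_{0}\oplus\bigoplus_{j\in J}Z_{j}$. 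If the codimension of $\widehat H$ in $H$ were at least $k$, then $\widehat H^{\perp}$ would contain some $W\in{\mathcal G}_{k}(H)$; since $W$ is orthogonal to $X_{0}$, $Y_{0}$ and every $Z_{j}$, the subspace $f(W)$ would be orthogonal to $f(X_{0})\oplus f(Y_{0})\oplus\bigoplus_{j}f(Z_{j})=H'$ while $f(W)\subseteq H'$, which is impossible. Hence $\widehat H$ has codimension at most $k-1$; in particular $\bigoplus_{j}Z_{j}$ is infinite-dimensional and we may take $J=\{1,2,3,\dots\}$.

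For each $m\ge 3$ set $M_{m}:=X_{0}\oplus Y_{0}\oplus Z_{1}\oplus\dots\oplus Z_{m-2}$ and $M'_{m}:=f(X_{0})\oplus f(Y_{0})\oplus f(Z_{1})\oplus\dots\oplus f(Z_{m-2})$, so that $\dim M_{m}=\dim M'_{m}=mk$ (the summands of $M'_{m}$ are mutually orthogonal $k$-dimensional subspaces because $f$ preserves orthogonality). Since $H'=M'_{m}\oplus\bigoplus_{j\ge m-1}f(Z_{j})$ by (*), any $W\in{\mathcal G}_{k}(M_{m})$ is orthogonal to all $Z_{j}$ with $j\ge m-1$, hence $f(W)$ is orthogonal to $\bigoplus_{j\ge m-1}f(Z_{j})$, so $f(W)\subseteq M'_{m}$. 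Thus $f$ restricts to an injective orthogonality preserving map ${\mathcal G}_{k}(M_{m})\to{\mathcal G}_{k}(M'_{m})$; identifying $M'_{m}$ with $M_{m}$ by a fixed unitary and applying Theorem \ref{theorem1} (with $i=0$ and $m\ge 3$, and noting $mk>2k$), we obtain that $f|_{{\mathcal G}_{k}(M_{m})}$ is a bijection of ${\mathcal G}_{k}(M_{m})$ onto ${\mathcal G}_{k}(M'_{m})$ induced by a unitary or anti-unitary operator $U_{m}\colon M_{m}\to M'_{m}$.

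Now the gluing. Along $M_{m}\subseteq M_{m+1}$ the operators $U_{m+1}|_{M_{m}}$ and $U_{m}$ induce the same transformation of ${\mathcal G}_{k}(M_{m})$, hence are of the same kind (both linear or both conjugate-linear) and differ by a scalar of modulus one; after renormalizing the $U_{m}$ to agree on $M_{3}$, this scalar is forced to be $1$, so the $U_{m}$ form a coherent family and assemble into a linear or conjugate-linear isometry $V$ of $\widehat H$ onto $H'$ (surjectivity onto $H'$ again comes from (*), as $\overline{\bigcup_{m}M'_{m}}=H'$), inducing $f$ on every ${\mathcal G}_{k}(M_{m})$. Next I would show $\widehat H=H$: otherwise take a unit vector $u\in\widehat H^{\perp}$ and $T_{0}\in{\mathcal G}_{k-1}(\widehat H)$ and put $T:=T_{0}\oplus{\mathbb C}u\in{\mathcal G}_{k}(H)$; each $W\in{\mathcal G}_{k}(M_{m})$ orthogonal to $T$ is automatically orthogonal to $u$, hence to $T_{0}$, so $f(W)=V(W)$ is orthogonal to $f(T)$, and the closed span of all such $V(W)$ equals $V(\widehat H\ominus T_{0})=H'\ominus V(T_{0})$ — here one uses that the dense subspace $\bigcup_{m}M_{m}$ of $\widehat H$ meets the finite-codimensional closed subspace $T_{0}^{\perp}\cap\widehat H$ in a dense subset of the latter, together with surjectivity of $V$. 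This gives $f(T)\subseteq V(T_{0})$, contradicting $\dim f(T)=k>k-1$. Hence $V$ is a linear or conjugate-linear isometry of $H$ onto $H'$, and finally the same computation with $T$ in place of $T_{0}$ shows that for an arbitrary $T\in{\mathcal G}_{k}(H)$ the subspace $f(T)$ is orthogonal to $V(T^{\perp})=H'\ominus V(T)$, whence $f(T)=V(T)$ by a dimension count; that is, $f$ is induced by $V$.

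The hard part is the passage from the finite-dimensional conclusions to a single global isometry. The delicate point is proving that the witnessing family in (*) is necessarily ``maximal'' (equivalently $\widehat H=H$): this is where Theorem \ref{theorem1}, the surjectivity of the glued operator, and a dimension count all have to be combined, and it is precisely the step that rules out the behaviour of Example \ref{exmp1}. The other ingredient that needs attention — elementary but indispensable — is the fact that a dense subspace meets a closed subspace of finite codimension in a subset dense in the latter; this is what carries the conclusion from the dense union of the $M_{m}$ to all of ${\mathcal G}_{k}(H)$. One must also check that the linear/conjugate-linear alternative is the same for all $m$, so that the limiting operator $V$ is of a single kind.
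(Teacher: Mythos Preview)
Your approach is sound in the separable case but has a gap when $H$ is non-separable, and it differs substantially from the paper's proof.

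\textbf{The gap.} You write ``we may take $J=\{1,2,3,\dots\}$'', but if $H$ is non-separable then $\widehat H$, having codimension at most $k-1$ in $H$, is non-separable as well, so any family $\{Z_j\}_{j\in J}$ whose sum together with $X_0\oplus Y_0$ equals $\widehat H$ must be uncountable. Your countable tower $\{M_m\}$ then has closure only a separable subspace of $\widehat H$, the direct limit $V$ is defined only there, and the density argument in the last two steps breaks down. The repair is routine---index instead by finite subsets $F\subseteq J$ with $|F|\ge 1$, apply Theorem \ref{theorem1} to each $M_F$, and check that the scalar normalisations can be made coherent over the directed system (agreement on a common nonzero $M_{F_0}$ forces the scalars to match)---but it is not what you wrote.

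\textbf{Comparison with the paper.} The paper avoids any global gluing. For $k\ge 2$ it shows directly that $f$ is adjacency preserving: given adjacent $X,Y$, choose $Y'\perp X$ with $X+Y\subset X+Y'$, invoke (*) for the orthogonal pair $X,Y'$, pick a \emph{single} $Z_{j_0}$ from the witnessing family, and observe that $f$ restricts to an orthogonality-preserving injection of ${\mathcal G}_k(M)$ into ${\mathcal G}_k(N)$, where $M=X\oplus Y'\oplus Z_{j_0}$ and $N=f(X)\oplus f(Y')\oplus f(Z_{j_0})$ are both $3k$-dimensional. Theorem \ref{theorem1} (case $i=0$, $m=3$) makes this restriction adjacency preserving, so $f(X),f(Y)$ are adjacent; then a single appeal to Theorem \ref{P1} finishes. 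For $k=1$ the paper instead verifies the hypotheses of the Faure--Fr\"olicher--Havlicek version of the Fundamental Theorem of Projective Geometry and concludes via Lemma \ref{lemma0}. The paper's route is shorter, needs no direct-limit bookkeeping, and handles the non-separable case with no extra effort because it uses (*) locally (a fresh witnessing family for each adjacent pair) rather than fixing one global family. Your route has the virtue of treating all $k\ge 1$ uniformly and of not invoking Theorem \ref{P1} or the projective-geometry machinery separately, at the price of the gluing and the finite-codimension density lemma.
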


The condition (*) implies that $f$ is orthogonality preserving.
The transformation considered in Example \ref{exmp1} does not satisfy (*).

\section{Proof of Theorem \ref{theorem1}}

First of all, we present some facts which will be exploited to prove Theorem \ref{theorem1}.
Two $k$-dimensional subspaces of $H$ are called {\it adjacent} if 
their intersection is $(k-1)$-dimensional or, equivalently, their sum is $(k+1)$-dimensional.
Any two distinct $1$-dimensional subspaces of $H$ are adjacent.
Similarly, if $\dim H=n$ is finite, then any two distinct $(n-1)$-dimensional subspaces of $H$ are adjacent. 
If $X,Y\in {\mathcal G}_{k}(H)$, then the distance $d(X,Y)$ between $X$ and $Y$ is defined as
the smallest integer $m$ such that there is a sequence 
$$X=X_{0},X_{1},\dots,X_{m}=Y,$$
where $X_{j-1},X_{j}$ are adjacent elements of ${\mathcal G}_{k}(H)$ for every $j\in \{1,\dots, m\}$.
It is well known that 
$$d(X,Y)=k-\dim(X\cap Y)=\dim(X+Y)-k,$$
see, for example, \cite[Section 2.3]{Pankov-book}.

\begin{theorem}[\cite{Pankov1}]\label{P1}
Suppose that $\dim H>2k>2$ and $f$ is a transformation of ${\mathcal G}_{k}(H)$ satisfying the following 
conditions:
\begin{enumerate}
\item[$\bullet$] 
$f$ is adjacency preserving, i.e.  
for any adjacent $X,Y\in {\mathcal G}_{k}(H)$ the images $f(X),f(Y)$ are adjacent; 
\item[$\bullet$] $f$ is orthogonality preserving. 
\end{enumerate}
Then $f$ is induced by a linear or a conjugate-linear isometry.
\end{theorem}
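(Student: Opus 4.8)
The plan is to first recognise $f$ as a map induced by a semilinear transformation by a Chow-type argument, and only afterwards to convert the orthogonality hypothesis into the statement that this transformation is a scalar multiple of a Hilbert-space isometry. Since $\dim H>2k>2$, the maximal cliques of the adjacency (Grassmann) graph of $\mathcal{G}_{k}(H)$ are exactly the \emph{stars} $\{W\in\mathcal{G}_{k}(H):P\subset W\}$, with $P$ ranging over $(k-1)$-dimensional subspaces, and the \emph{tops} $\{W\in\mathcal{G}_{k}(H):W\subset Q\}$, with $Q$ ranging over $(k+1)$-dimensional subspaces; this graph has diameter at least $2$ and contains triangles. I would invoke the version of Chow's theorem valid for adjacency preserving maps that are not assumed injective or surjective (as in Pankov's book and the references of this paper): either the image of $f$ is contained in a single star or a single top, or $f$ is induced by a semilinear embedding $L\colon H\to H$, i.e. $f(X)=L(X)$ for all $X\in\mathcal{G}_{k}(H)$, where $L$ is additive and $\sigma$-homogeneous for some automorphism $\sigma$ of $\mathbb{C}$ (no ``duality'' term appears because $\dim H\neq 2k$).

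The next step is to eliminate the degenerate alternative using orthogonality. Because $k>1$, two elements of a top have sum of dimension $\le k+1<2k$ and so meet non-trivially, while two elements of a star both contain a fixed non-zero subspace; hence no star and no top contains an orthogonal pair. But $\dim H\ge 2k+1$, so $\mathcal{G}_{k}(H)$ does contain orthogonal pairs $X\perp Y$, and then $f(X)\perp f(Y)$ can lie in no common star or top. Therefore $f$ is induced by a semilinear embedding $L$; in particular $f$ is injective. I would then show the orthogonality hypothesis descends to vectors: given non-zero $x\perp y$, choose a $k$-dimensional subspace $Y$ of $x^{\perp}$ through $y$ (possible since $\dim x^{\perp}=\dim H-1\ge 2k$), then a $k$-dimensional subspace $X$ of $Y^{\perp}$ through $x$; since $X\perp Y$ we get $L(X)\perp L(Y)$, and as $Lx\in L(X)$, $Ly\in L(Y)$ this yields $Lx\perp Ly$. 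Thus $\langle x,y\rangle=0$ implies $\langle Lx,Ly\rangle=0$.

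To finish, consider the $\sigma$-sesquilinear form $b(x,y):=\langle Lx,Ly\rangle$, whose orthogonality relation is coarser than that of the inner product. Using the classical fact that, in dimension $\ge 3$, a sesquilinear form whose orthogonality refines that of a non-degenerate Hermitian form is a scalar multiple of it (in one of the two admissible symmetry types), one concludes that $\sigma$ is the identity or complex conjugation and that $b(x,y)=c\langle x,y\rangle$ or $b(x,y)=c\overline{\langle x,y\rangle}$; putting $x=y$ and using injectivity of $L$ forces $c>0$. Then $c^{-1/2}L$ is a linear or conjugate-linear isometry of $H$ inducing $f$, which is the assertion.

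The part of the argument I expect to be most delicate is the first one: making sure the Chow-type dichotomy is genuinely available in exactly this generality — arbitrary transformations, not assumed injective or surjective, $H$ possibly infinite-dimensional, and only the bound $\dim H>2k$ — so that the mere existence and preservation of one orthogonal pair suffices to exclude the degenerate case. A secondary technical point is the infinite-dimensional bookkeeping in the last step, where one must check that the form comparison really does pin down $\sigma\in\{\mathrm{id},\ \text{conjugation}\}$ and produces an honest (possibly non-surjective) isometry rather than only something isometric up to a bounded distortion.
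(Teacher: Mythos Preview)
The present paper does not contain a proof of Theorem~\ref{P1}; the result is quoted from \cite{Pankov1} and used as a black box in the proofs of Theorem~\ref{theorem1} and Corollary~\ref{cor}. There is therefore no proof in this paper to compare your proposal against.

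That said, your outline is the standard route to this kind of statement and is, in broad strokes, the one taken in \cite{Pankov1} and in \cite{Pankov-book}: a Chow--type structure result for adjacency preserving maps of $\mathcal{G}_{k}(H)$, elimination of the degenerate ``image in a single maximal clique'' alternative, and then the passage from a semilinear embedding sending orthogonal vectors to orthogonal vectors to a scalar multiple of a linear or conjugate-linear isometry. The last step is exactly Lemma~\ref{lemma0} of the present paper, so you need not redo the form-comparison argument by hand. Your elimination of the star/top alternative and your descent of orthogonality from $k$-subspaces to vectors are both correct as written.

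Your own caveat is the genuine sticking point: the Chow-type dichotomy you invoke must be available for transformations of $\mathcal{G}_{k}(H)$ that are merely adjacency preserving --- not assumed injective or surjective --- with $H$ possibly infinite-dimensional and only $\dim H>2k$. Such a version does exist (see the treatment in \cite{Pankov-book}), but it is not a one-line consequence of classical Chow, and the argument in \cite{Pankov1} in fact devotes most of its effort to precisely this point rather than appealing to an off-the-shelf statement. If you intend this as a self-contained proof, you would need either to cite the exact theorem from \cite{Pankov-book} that delivers the dichotomy in this generality, or to supply the clique analysis (stars versus tops, and how $f$ acts on them) yourself.
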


Also, we will need the following result mentioned in the Introduction.

\begin{theorem}[\cite{Pankov1}]\label{P2}
If the dimension of $H$ is finite and greater than $2k$,
then every transformation of ${\mathcal G}_{k}(H)$ preserving the orthogonality relation in both directions 
is a bijection induced by a unitary or anti-unitary operator on $H$.
\end{theorem}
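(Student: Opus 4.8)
The plan is to reduce the entire statement to the single assertion that $f$ preserves adjacency, and then to invoke Theorem \ref{P1}. Indeed, suppose we have shown that the injective, orthogonality preserving transformation $f$ also preserves adjacency. Since $\dim H>2k$ and (for the genuinely new content) $k\ge 2$, so that $\dim H>2k>2$, Theorem \ref{P1} applies and produces a linear or conjugate-linear isometry $U$ of $H$ inducing $f$. Because $H$ is finite-dimensional, an injective linear (resp. conjugate-linear) isometry of $H$ into itself is automatically surjective, hence unitary (resp. anti-unitary); thus $f$ is a bijection induced by a unitary or anti-unitary operator, as claimed. The case $k=1$ is separate and, as noted in the Introduction, follows from the Fundamental Theorem of Projective Geometry. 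So the whole problem is to establish adjacency preservation for $k\ge 2$.

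The engine for this is the family of maximal orthogonal systems. Writing $\dim H=mk+i$ with $0\le i<k$, a set of pairwise orthogonal $k$-subspaces has at most $m$ members and every orthogonal system extends to one with exactly $m$ members; hence the maximal orthogonal systems are precisely the orthogonal families of cardinality $m$. Since $f$ is injective and orthogonality preserving, it carries each such family to an orthogonal family of the same cardinality, i.e. to a maximal one. Now I would compare two maximal systems differing in a single member. If $\mathcal F_{1}=\{X,Z_{1},\dots,Z_{m-1}\}$ and $\mathcal F_{2}=\{Y,Z_{1},\dots,Z_{m-1}\}$ with $X\ne Y$, then $X,Y$ both lie in $V:=(Z_{1}+\dots+Z_{m-1})^{\perp}$, a subspace of dimension $k+i$, so $d(X,Y)=\dim(X+Y)-k\le i$; conversely, any pair with $0<d(X,Y)\le i$ arises this way, since $(X+Y)^{\perp}$ then has dimension $(m-1)k+(i-d(X,Y))$ and therefore contains $m-1$ mutually orthogonal $k$-subspaces. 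By injectivity the image pair $f(\mathcal F_{1}),f(\mathcal F_{2})$ are distinct maximal systems still sharing $m-1$ members, so $f$ preserves the relation $R_{i}=\{(X,Y):0<d(X,Y)\le i\}$. For $i=1$ this relation is exactly adjacency, and we are done.

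The main obstacle is the range $i\ge 2$, where the argument above yields only preservation of the coarse relation $R_{i}$ rather than adjacency itself; moreover a naive refinement by counting common orthogonal neighbours fails, because the maximal orthogonal family orthogonal to $X+Y$ has $m-1$ members for \emph{every} distance $d$ with $1\le d\le i$, so it cannot separate adjacency from larger distances. What I would do instead is to recover the Grassmann pencil structure from $R_{i}$ together with the orthogonality relation. Concretely, I would analyse the maximal cliques of the graph $\Gamma_{i}$ associated with $R_{i}$: they come in \emph{star} type $[S\rangle=\{U:S\subseteq U\}$ with $\dim S=k-i$ and \emph{top} type $\langle T]=\{U:U\subseteq T\}$ with $\dim T=k+i$, and their intersection pattern encodes adjacency as collinearity. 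The numerical hypotheses enter exactly here: when $m\ge i+1$ there is enough orthogonal room to realise all the required configurations inside an ambient maximal system and to separate the two clique types, which should let one run a descent $R_{i}\rightsquigarrow R_{i-1}\rightsquigarrow\cdots\rightsquigarrow R_{1}$ down to adjacency; the small values $i\in\{2,3\}$ would instead be handled by a direct configurational analysis that does not need $m$ large. I expect this clique analysis — in particular showing that $f$ respects the distinction between stars and tops — to be the technically hardest step.

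Finally, the degenerate case $i=0$ (so $\dim H=mk$ and $m\ge 3$) must be treated separately, since then two distinct maximal orthogonal systems share at most $m-2$ members and the relation extracted in the second paragraph is vacuous. Here I would compare maximal systems differing in two members: the two discarded orthogonal pairs span a common $2k$-dimensional subspace $V=(Z_{1}+\dots+Z_{m-2})^{\perp}$, and by varying the shared part and exploiting $m\ge 3$ I would reconstruct enough incidence between $k$-subspaces and their orthogonal mates inside such $V$'s to pin down adjacency, after which Theorem \ref{P1} and the finite-dimensional isometry argument conclude exactly as before.
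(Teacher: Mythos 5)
Your overall frame --- derive adjacency preservation, feed it into Theorem \ref{P1}, and use the fact that an injective isometry of a finite-dimensional space into itself is automatically surjective --- is sound, and your $i=1$ argument via maximal orthogonal systems is complete and correct. But the proposal as a whole is aimed at the wrong statement. Theorem \ref{P2} (which this paper does not prove but quotes from \cite{Pankov1}) assumes nothing beyond $\dim H>2k$ finite; the numerical conditions $i\le 3$ or $m\ge i+1$ are the hypotheses of Theorem \ref{theorem1}, not of Theorem \ref{P2}. Your sketch explicitly leans on them (``the numerical hypotheses enter exactly here''), so even if it were completed it would leave uncovered, for instance, $\dim H=2k+4$ with $k\ge 5$ (where $m=2<i+1$). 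The root cause is that you never use the hypothesis that orthogonality is preserved \emph{in both directions}: every step you run (images of maximal orthogonal systems are maximal, preservation of $R_i$, the clique analysis, the $i=0$ sketch) uses only one-direction preservation together with injectivity --- and injectivity is not even a hypothesis of Theorem \ref{P2}; it has to be extracted from the two-sided assumption first. In effect you are attempting to prove Theorem \ref{theorem1} with its numerical hypotheses deleted, a statement strictly stronger than anything claimed in this paper or in \cite{Pankov1}. The two-sided hypothesis is exactly the resource that makes Theorem \ref{P2} true for all $n>2k$, and a correct proof must exploit it somewhere.

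There is also a concrete mathematical error in the main technical step. The maximal cliques of the graph $\Gamma_i$ given by $0<d(X,Y)\le i$ are \emph{not} only stars $[S\rangle$ with $\dim S=k-i$ and tops $\langle T]$ with $\dim T=k+i$ once $i\ge 2$ and $k\ge 3$. For $k=3$ and $i=2$, the three subspaces $\langle e_1,e_2,e_4\rangle$, $\langle e_1,e_3,e_5\rangle$, $\langle e_2,e_3,e_6\rangle$ are pairwise at distance $2$, yet their common intersection is zero and their span is $6$-dimensional, so no star (which would require a common line) and no top (which would require the span to fit in a $5$-dimensional subspace) contains this triangle; hence any maximal clique extending it is of neither type. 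The star/top classification you are importing is the one valid for the Grassmann graph itself (the relation $d=1$), not for the relation $d\le i$. Consequently the proposed descent $R_i\rightsquigarrow R_{i-1}\rightsquigarrow\cdots\rightsquigarrow R_1$ has no foundation as stated, and both it and the $i=0$ case remain declarations of intent rather than arguments.
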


Suppose that $\dim H=n$ is finite. 
It was noted above that Theorem \ref{theorem1} holds for $k=1$.
We assume that  $k\ge 2$ and $$n=mk+i>2k,$$ where $m\ge 2$ and $i\in \{0,1,\dots,k-1\}$.
Let $f$ be an injective transformation of ${\mathcal G}_{k}(H)$ preserving the orthogonality relation.

\subsection{The case $i=0$}
In this case, we have $m\ge 3$.
Suppose that $f(X),f(Y)$ are orthogonal for some $X,Y\in {\mathcal G}_{k}(H)$;
we show that $X,Y$ are orthogonal. 

Observe that 
$$\dim(X^{\perp}\cap Y^{\perp})\ge n-2k=(m-2)k.$$
In the case when $m\ge 4$,
there are mutually orthogonal $k$-dimensional subspaces 
$$Z_{1},\dots,Z_{m-2}\subset X^{\perp} \cap Y^{\perp} ;$$
if $m=3$, then we take any $k$-dimensional subspace $Z_1$ in $X^{\perp} \cap Y^{\perp}$.
Since $f$ is orthogonality preserving,
each $f(Z_j)$ is orthogonal to $f(X)+f(Y)$.
By our assumption, $f(X)$ and $f(Y)$ are orthogonal.
The dimension of $H$ is equal to $mk$ and $H$ is the orthogonal sum of
$$f(Z_1),\dots,f(Z_{m-2}),f(X),f(Y).$$
The dimension of $X^{\perp}$ is equal to $n-k=(m-1)k$, i.e.
$X^{\perp}$ contains the unique $k$-dimensional subspace $Z$ orthogonal to all $Z_{j}$
and $H$ is the orthogonal sum of
$$f(Z_1),\dots,f(Z_{m-2}),f(X),f(Z).$$
Therefore,  $f(Y)=f(Z)$. 
Since $f$ is injective, we have $Y=Z$, i.e. $Y$ is orthogonal to $X$. 

So, $f$ is orthogonality preserving in both directions and Theorem \ref{P2} gives the claim.

\subsection{The case $i\in \{1,2,3\}$}
It is sufficient to show that $f$ is adjacency preserving and to apply Theorem \ref{P1}.

The general case can be reduced to the case when $m=2$.
If $X,Y\in {\mathcal G}_{k}(H)$ are adjacent and $m\ge 3$,
then we take mutually orthogonal $X_{1},\dots,X_{m-2}\in {\mathcal G}_{k}(H)$
whose sum is orthogonal to $X+Y$ (for $m=3$ we choose any $X_1\in {\mathcal G}_{k}(H)$ orthogonal to $X+Y$).
Consider the $(2k+i)$-dimensional subspaces $M$ and $N$ which are the orthogonal complements of
$$X_{1}+\dots+X_{m-2}\;\mbox{ and }\;f(X_{1})+\dots+f(X_{m-2}),$$
respectively. Then $f$ transfers any $k$-dimensional subspace of $M$ to a $k$-dimensional subspace of $N$,
i.e. it induces an orthogonality preserving injection of ${\mathcal G}_{k}(M)$ to ${\mathcal G}_{k}(N)$. 

From this moment, we assume that $m=2$, i.e. $n=2k+i$ with $i\in \{1,2,3\}$. 
Let ${\mathcal G}$ be the set of all subspaces $U\subset H$ satisfying 
$$k<\dim U \le k+i.$$
For every $U\in {\mathcal G}$
we define $f'(U)$ as the smallest subspace containing $f(Y)$ for all $k$-dimensional subspaces $Y\subset U$.
Since $f$ is injective, $\dim f'(U)>k$.
If $Z$ is a $k$-dimensional subspace orthogonal to $U$ 
(the inequality $\dim U^{\perp}\ge k$ guarantees that such a subspace exists),
then $f'(U)$ is orthogonal to $f(Z)$ which implies that $\dim f'(U)\le k+i$ and $f'(U)$ belongs to ${\mathcal G}$.
So, $f'$ is a transformation of ${\mathcal G}$ and for every $U\in {\mathcal G}$ we have 
$$f({\mathcal G}_{k}(U))\subset {\mathcal G}_{k}(f'(U)).$$
If $U,S\in {\mathcal G}$ are orthogonal, then the same holds for $f'(U),f'(S)$. 

The cases $i=1$ and $i=2$ are simple. 

\begin{lemma}\label{lemma-i=1,2}
If $i\in \{1,2\}$, then $f$ is adjacency preserving. 
\end{lemma}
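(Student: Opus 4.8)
The plan is to exploit the auxiliary transformation $f'$ of $\mathcal{G}$ just constructed, together with the numerology $n=2k+i$ with $i\le 2$, in order to pin down the dimension of $f'(X+Y)$ whenever $X,Y$ are adjacent.

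First I would reformulate the goal. Let $X,Y\in\mathcal{G}_{k}(H)$ be adjacent, so that $U:=X+Y$ satisfies $\dim U=k+1$; since $k<k+1\le k+i$, we have $U\in\mathcal{G}$, hence $f(X),f(Y)\in\mathcal{G}_{k}(f'(U))$. As $f$ is injective, $f(X)\ne f(Y)$, so $k+1\le\dim\bigl(f(X)+f(Y)\bigr)\le\dim f'(U)$. We already know that $k<\dim f'(U)\le k+i$. Therefore it suffices to prove that $\dim f'(U)=k+1$: in that case $f(X)+f(Y)$ is $(k+1)$-dimensional, i.e. $f(X)$ and $f(Y)$ are adjacent. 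In particular, for $i=1$ there is nothing left to do, since $\dim f'(U)\le k+1$ holds automatically.

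It remains to treat $i=2$, where a priori $\dim f'(U)$ could equal $k+2$. Here I would bring in orthogonality: since $n=2k+2$ and $\dim U=k+1$, the orthogonal complement $U^{\perp}$ is again $(k+1)$-dimensional, so $U^{\perp}\in\mathcal{G}$. Because $U$ and $U^{\perp}$ are orthogonal, $f'(U)$ and $f'(U^{\perp})$ are orthogonal, whence $\dim f'(U)+\dim f'(U^{\perp})\le n=2k+2$. Combining this with $\dim f'(U^{\perp})\ge k+1$ (which holds since $f$ is injective) forces $\dim f'(U)\le k+1$, and together with $\dim f'(U)>k$ we conclude $\dim f'(U)=k+1$, as required. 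This proves that $f$ is adjacency preserving, after which Theorem \ref{P1} will finish the outer argument.

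I do not expect a serious obstacle in this lemma; the argument is short and uses only properties of $f'$ already recorded. The one delicate point is the dimension bookkeeping: the passage to $U^{\perp}$ works precisely because $i\le 2$, so that $U^{\perp}$ still belongs to $\mathcal{G}$ and the orthogonality estimate is tight enough to rule out $\dim f'(U)=k+2$. For $i=3$ the same passage only gives $\dim f'(U)\le k+2$, which is no longer decisive, which is why that case (and, a fortiori, $i\ge 4$) must be handled separately by a more elaborate argument.
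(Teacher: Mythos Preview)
Your argument is correct and is essentially the same as the paper's: both use that $f'(U)\in\mathcal G$ forces $\dim f'(U)=k+1$ when $i=1$, and for $i=2$ both exploit the orthogonality of $f'(U)$ and $f'(U^{\perp})$ together with $\dim f'(U^{\perp})>k$ to rule out $\dim f'(U)=k+2$. The only cosmetic difference is that the paper phrases the $i=2$ step by contradiction while you bound $\dim f'(U)$ directly.
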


\begin{proof}
If $i=1$, then ${\mathcal G}={\mathcal G}_{k+1}(H)$ and we have
$$f'({\mathcal G}_{k+1}(H))\subset {\mathcal G}_{k+1}(H)$$
which implies that $f$ is adjacency preserving
(since $X,Y\in {\mathcal G}_{k}(H)$ are adjacent if and only if $X+Y\in {\mathcal G}_{k+1}(H)$).

Let $i=2$. 
Then  $${\mathcal G}={\mathcal G}_{k+1}(H)\cup {\mathcal G}_{k+2}(H).$$
If $f$ is not adjacency preserving,
then  there is  $U\in {\mathcal G}_{k+1}(H)$ such that $f'(U)$ is an element of ${\mathcal G}_{k+2}(H)$.
Note that $U^{\perp}$ belongs to ${\mathcal G}_{k+1}(H)$ and $f'(U),f'(U^{\perp})$ are orthogonal.
This implies that the dimension of $f'(U^{\perp})$ is not greater than $k$
which is impossible.
\end{proof}

The case $i=3$ is more complicated and the proof will be given in several steps.
In this case, ${\mathcal G}$ is the union of 
${\mathcal G}_{k+1}(H)$, ${\mathcal G}_{k+2}(H)$ and ${\mathcal G}_{k+3}(H)$.
It is sufficient to establish that 
$$f'({\mathcal G}_{k+2}(H))\subset {\mathcal G}_{k+2}(H).$$ 
Indeed, if $U\in {\mathcal G}_{k+1}(H)$, then $U^{\perp}$ belongs to ${\mathcal G}_{k+2}(H)$
and the latter inclusion shows that  the same holds for $f'(U^{\perp})$. 
Since $f'(U)$ and $f'(U^{\perp})$ are orthogonal,
the dimension of $f'(U)$ is not greater than $k+1$ which means that $f'(U)\in {\mathcal G}_{k+1}(H)$.
So, $f'$ transfers ${\mathcal G}_{k+1}(H)$ to itself and $f$ is adjacency preserving. 

Our first step is to show that
$$f'({\mathcal G}_{k+2}(H))\subset {\mathcal G}_{k+1}(H)\cup {\mathcal G}_{k+2}(H).$$ 
If $U\in {\mathcal G}_{k+2}(H)$, then
$U^{\perp}\in {\mathcal G}_{k+1}(H)$ and $f'(U),f'(U^{\perp})$ are orthogonal.
In the case when $f'(U)$ belongs to ${\mathcal G}_{k+3}(H)$,
the dimension of $f'(U^{\perp})$ is not greater than $k$, a contradiction.

Two distinct elements of ${\mathcal G}_{k+1}(H)\cup{\mathcal G}_{k+2}(H)$ are 
said to be {\it$'$-adjacent} if one of the following possibilities is realized:
\begin{enumerate}
\item[$\bullet$] these are adjacent elements of ${\mathcal G}_{k+2}(H)$;
\item[$\bullet$] one of them belongs to ${\mathcal G}_{k+2}(H)$, the other to ${\mathcal G}_{k+1}(H)$
and the $(k+1)$-dimensional subspace is contained in the $(k+2)$-dimensional.
\end{enumerate}

\begin{lemma}\label{lemma-'}
If  $U,V\in {\mathcal G}_{k+2}(H)$ are adjacent, then 
$f'(U),f'(V)$ are $'$-adjacent or  $f'(U)=f'(V)$.
\end{lemma}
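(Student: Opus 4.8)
The plan is to reduce everything to the intersection $W=U\cap V$, exploiting the monotonicity of $f'$ and the inclusion $f'({\mathcal G}_{k+2}(H))\subset{\mathcal G}_{k+1}(H)\cup{\mathcal G}_{k+2}(H)$ established above.

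First I would record that $f'$ is monotone: if $U\subset S$, then every $k$-dimensional subspace of $U$ is a $k$-dimensional subspace of $S$, and hence $f'(U)\subset f'(S)$. Since $U,V$ are adjacent, $W=U\cap V$ is $(k+1)$-dimensional and is contained in both $U$ and $V$; therefore $f'(W)\subset f'(U)\cap f'(V)$. Next I would check the lower bound $\dim f'(W)\ge k+1$: as $\dim W=k+1\ge 2$, the subspace $W$ contains two distinct $k$-dimensional subspaces $Y_{1}\ne Y_{2}$, and by injectivity of $f$ the images $f(Y_{1})\ne f(Y_{2})$ are distinct $k$-dimensional subspaces, so $f(Y_{1})+f(Y_{2})\subset f'(W)$ has dimension at least $k+1$.

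With these two observations in hand the lemma follows from a short case analysis on $\dim f'(U)$ and $\dim f'(V)$, both of which lie in $\{k+1,k+2\}$. If both equal $k+1$, then $f'(W)\subset f'(U)$ and $f'(W)\subset f'(V)$ together with $\dim f'(W)\ge k+1$ force $f'(W)=f'(U)=f'(V)$. If exactly one of them, say $f'(U)$, has dimension $k+1$, then likewise $f'(U)=f'(W)\subset f'(V)$, and since the two subspaces have different dimensions they are $'$-adjacent by the second clause of the definition. Finally, if both have dimension $k+2$ and $f'(U)\ne f'(V)$, then $f'(U)\cap f'(V)$ is a proper subspace of the $(k+2)$-dimensional space $f'(U)$ and contains $f'(W)$, so $\dim(f'(U)\cap f'(V))=k+1$; thus $f'(U),f'(V)$ are adjacent, hence $'$-adjacent.

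I do not expect a genuine obstacle in this lemma: once the monotonicity of $f'$ and the bound $\dim f'(W)\ge k+1$ are in place, the rest is dimension bookkeeping, and the hypotheses enter only through that bound (injectivity of $f$ and $k\ge 2$). The substantial work in the case $i=3$ should instead appear in the later steps, where this local statement has to be propagated into the global inclusion $f'({\mathcal G}_{k+2}(H))\subset{\mathcal G}_{k+2}(H)$.
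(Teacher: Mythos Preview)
Your argument is correct and follows essentially the same line as the paper's proof. The paper observes that $U\cap V$ contains infinitely many $k$-dimensional subspaces, whose images (by injectivity of $f$) force $\dim(f'(U)\cap f'(V))\ge k+1$, and then concludes the $'$-adjacency/coincidence dichotomy in one sentence; you phrase the same idea via the monotonicity of $f'$ applied to $W=U\cap V$ and then spell out the three-case dimension bookkeeping explicitly.
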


\begin{proof}
The subspace $U\cap V$ is $(k+1)$-dimensional and 
contains infinitely many $k$-dimensional subspaces. 
Therefore, the subspace $f'(U)\cap f'(V)$ contains infinitely many $k$-dimensional subspaces
which is possible only in the case when $f'(U),f'(V)$ are $'$-adjacent or coincident.
\end{proof}

The following lemma completes the proof for the case $i=3$. 

\begin{lemma}
$f'(U)\in {\mathcal G}_{k+2}(H)$ for every $U\in {\mathcal G}_{k+2}(H)$.
\end{lemma}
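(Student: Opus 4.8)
The plan is to rule out the remaining possibility that $f'(U)\in{\mathcal G}_{k+1}(H)$ for some $U\in{\mathcal G}_{k+2}(H)$, thereby upgrading the inclusion $f'({\mathcal G}_{k+2}(H))\subset{\mathcal G}_{k+1}(H)\cup{\mathcal G}_{k+2}(H)$ (established in the first step) to $f'({\mathcal G}_{k+2}(H))\subset{\mathcal G}_{k+2}(H)$. Suppose for contradiction that $f'(U_0)$ is $(k+1)$-dimensional for some $U_0\in{\mathcal G}_{k+2}(H)$. Since $n=2k+3$, the orthogonal complement $U_0^{\perp}$ is in ${\mathcal G}_{k+1}(H)$, and $f'(U_0)$ is orthogonal to $f'(U_0^{\perp})$; both together span a subspace of dimension at most $\dim f'(U_0)+\dim f'(U_0^{\perp})$. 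The key observation I would use is that the $f'$-images of a sufficiently large orthogonal family of subspaces must have total dimension bounded by $n=2k+3$, so a $(k+1)$-dimensional value of $f'$ forces the orthogonal partners of $U_0$ to have unusually small $f'$-images — and one can iterate this to produce a $k$-dimensional subspace $Y$ with $\dim f'(Y)<k$, contradicting injectivity of $f$ (which gives $\dim f'(W)\geq k$ for all $W\supseteq$ some $k$-dimensional subspace, in particular $\dim f'(W)=k$ whenever $W\in{\mathcal G}_k(H)$).

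More precisely, the argument runs as follows. First I would use Lemma~\ref{lemma-'}: the set of $U\in{\mathcal G}_{k+2}(H)$ with $f'(U)\in{\mathcal G}_{k+2}(H)$ and the set with $f'(U)\in{\mathcal G}_{k+1}(H)$ partition ${\mathcal G}_{k+2}(H)$, and adjacent elements of ${\mathcal G}_{k+2}(H)$ have $'$-adjacent or coincident images. Since ${\mathcal G}_{k+2}(H)$ is connected in the adjacency graph, if $f'$ takes a $(k+1)$-dimensional value somewhere, I would analyze how a $(k+2)$-dimensional value $f'(V)$ and a $(k+1)$-dimensional value $f'(U)$ can be $'$-adjacent: the $(k+1)$-subspace must lie in the $(k+2)$-subspace. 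Tracking this along adjacency paths, together with the constraint that $f'$ is orthogonality preserving on ${\mathcal G}$, I would aim to show either that $f'$ is identically $(k+1)$-dimensional on ${\mathcal G}_{k+2}(H)$, or that a "mixed" configuration leads to a dimension overflow. In the first case, pick any orthogonal pair $U,V\in{\mathcal G}_{k+2}(H)$ — impossible since $2(k+2)>n$ — so instead pick $U\in{\mathcal G}_{k+2}(H)$ and a $k$-dimensional $Z\subset U^{\perp}$; then $f'(U)$ (now $(k+1)$-dimensional) is orthogonal to $f(Z)$, fine, but consider the $(k+1)$-dimensional $U^{\perp}$: chasing $f'$ on the flag $U^{\perp}\subset W$ for various $(k+2)$-dimensional $W\supset U^{\perp}$, each such $W$ contains a $k$-subspace orthogonal to enough of $U$ that the images must together overfill $H$.

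I expect the main obstacle to be the bookkeeping in the mixed case: showing that $f'$ cannot simultaneously take a $(k+1)$-dimensional value at some $U$ and a $(k+2)$-dimensional value at an adjacent $V$ without violating either injectivity of $f$ (via $\dim f(Y)=k$ for $Y\in{\mathcal G}_k(H)$) or the orthogonality-preserving property. The cleanest route is probably a counting argument: fix $U\in{\mathcal G}_{k+2}(H)$ with $\dim f'(U)=k+1$; its complement $U^{\perp}\in{\mathcal G}_{k+1}(H)$ sits inside two "orthogonal-type" $(k+2)$-subspaces, and writing $H$ as an orthogonal sum of $k$-dimensional pieces adapted to $U$ and reading off the dimensions of their $f$-images should force $\sum \dim f(\text{pieces}) > n$. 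Making "force" precise — identifying the exact orthogonal decomposition that produces the contradiction — is the delicate point, and I would devote most of the write-up to constructing that decomposition explicitly using that $n-(k+2)=k+1\geq k$ leaves just enough room to place one extra orthogonal $k$-subspace whose image overflows.
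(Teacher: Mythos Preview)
Your proposal has a genuine gap: there is no concrete mechanism to produce the contradiction, and the ``dimension overflow'' heuristic you are relying on does not work in this setting. Since $n=2k+3$, any orthogonal family of $k$-dimensional subspaces has at most two members (for $k\ge 4$; and only a trivial remainder for $k=2,3$), so the images of such a family occupy at most $2k<n$ dimensions and never overflow. The sentence ``writing $H$ as an orthogonal sum of $k$-dimensional pieces adapted to $U$ and reading off the dimensions of their $f$-images should force $\sum\dim f(\text{pieces})>n$'' therefore cannot be made precise as stated. You correctly observe, via Lemma~\ref{lemma-'}, that two adjacent $(k+2)$-subspaces whose $f'$-images are both $(k+1)$-dimensional must have equal images; by connectivity this handles the case where $f'$ is identically $(k+1)$-dimensional on ${\mathcal G}_{k+2}(H)$ (the common image would then contain two orthogonal $k$-subspaces, impossible in dimension $k+1$). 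But the mixed case, which you yourself flag as the delicate point, is left entirely open.

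The paper's proof supplies the missing idea. One takes a pair $U,V\in{\mathcal G}_{k+2}(H)$ that are \emph{compatible} with $\dim(U\cap V)=1$; this condition is equivalent to the existence of infinitely many $k$-subspaces of $U$ orthogonal to infinitely many $k$-subspaces of $V$, a property that transfers to $f'(U),f'(V)$. Since $d(U,V)=k+1$, there is an adjacency path $U=U_0,\dots,U_{k+1}=V$ in ${\mathcal G}_{k+2}(H)$. If, say, $f'(V)\in{\mathcal G}_{k+1}(H)$, the transferred orthogonality forces either $f'(U)=f'(V)^{\perp}\in{\mathcal G}_{k+2}(H)$ or $f'(U),f'(V)\in{\mathcal G}_{k+1}(H)$ orthogonal. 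In either case one locates the last (respectively first and last) index where the image is $(k+2)$-dimensional and compares two lower bounds on $\dim(f'(U_t)\cap f'(U_j))$: the path length gives $\ge k+2-(j-t)\ge 3$ (respectively $\ge 2$), while the orthogonality constraint forces it to be $\le 2$ (respectively $=1$). That contradiction is what your overflow idea was groping toward; the compatible pair with $1$-dimensional intersection is the structure you are missing.
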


\begin{proof}
Recall that two subspaces $U,V\subset H$ are called {\it compatible} if 
there are subspaces $U'\subset U$ and $V'\subset V$ such that 
$U\cap V, U',V'$ are mutually orthogonal and 
$$U=U'+(U\cap V),\;\;\;V=V'+(U\cap V).$$
Let $U$ and $V$ be distinct $(k+2)$-dimensional subspaces of $H$.
Then $\dim(U\cap V)\ge 1$. The following two conditions are equivalent:
\begin{enumerate}
\item[$\bullet$] $U,V$ are compatible and $\dim(U\cap V)=1$;
\item[$\bullet$] there are infinitely many $k$-dimensional subspaces of $U$ which are orthogonal to
infinitely many $k$-dimensional subspaces of $V$.
\end{enumerate}
Suppose that one of these conditions holds.
Then there are infinitely many $k$-dimensional subspaces of $f'(U)$ which are orthogonal to
infinitely many $k$-dimensional subspaces of $f'(V)$.

The equation $\dim(U\cap V)=1$ shows that $d(U,V)=k+1$ and there is a sequence
$$U=U_{0},U_{1},\dots, U_{k},U_{k+1}=V,$$
where $U_{j},U_{j+1}$ are adjacent elements of ${\mathcal G}_{k+2}(H)$
for every $j\in \{0,1,\dots,k\}$.
By Lemma \ref{lemma-'}, 
$$f'(U_{j}),f'(U_{j+1})\in {\mathcal G}_{k+2}(H)\cup {\mathcal G}_{k+1}(H)$$ 
are $'$-adjacent or coincident;
in particular, if for a certain $j\in \{0,1,\dots,k\}$ both $f'(U_{j}),f'(U_{j+1})$ belong to ${\mathcal G}_{k+1}(H)$,
then $f'(U_j)=f'(U_{j+1})$.

Suppose that at least one of $f'(U),f'(V)$, say $f'(V)$,  belongs to ${\mathcal G}_{k+1}(H)$.
Since there are infinitely many $k$-dimensional subspaces of $f'(U)$ which are orthogonal to
infinitely many $k$-dimensional subspaces of $f'(V)$, one of the following possibilities is realized:
\begin{enumerate}
\item[(1)] $f'(U)\in {\mathcal G}_{k+2}(H)$ is the orthogonal complement of $f'(V)$;
\item[(2)] $f'(U),f'(V)\in {\mathcal G}_{k+1}(H)$ are orthogonal.
\end{enumerate}
In the case (1), we take the maximal  $j\in \{0,1,\dots,k+1\}$ such that $f'(U_j)$  belongs to ${\mathcal G}_{k+2}(H)$.
Since $f'(V)\in {\mathcal G}_{k+1}(H)$  (by assumption), we have $j\le k$.
Also, if $j<k$, then $$f'(U_{j+1})=\dots =f'(U_{k+1})=f'(V).$$
Therefore, $f'(V) \subset f'(U_j)$.
On the other hand, $f'(V)$ is the orthogonal complement of $f'(U)$ and
$$\dim (f'(U)\cap f'(U_j))=1.$$
The subspaces $U$ and $U_{j}$ are connected by the sequence $U=U_{0},U_{1},\dots,U_{j}$ with $j\le k$
which implies that 
$$\dim (f'(U)\cap f'(U_j))\ge k+2-j\ge 2,$$
and we get a contradiction.

In the case (2) is similar. We consider minimal $t$ and maximal  $j$ 
such that $f'(U_t)$ and $f'(U_j)$ belong to ${\mathcal G}_{k+2}(H)$.
Since $f'(U)$ and $f'(V)$ are elements of ${\mathcal G}_{k+1}(H)$, we have $t\ge 1$ and $j\le k$.
As in the previous case, we have
$$f'(U)=f'(U_{0})=\dots=f'(U_{t-1})\;\mbox{ and }\; f'(U_{j+1})=\dots =f'(U_{k+1})=f'(V)$$
if $t>1$ and $j<k$, respectively. 
Therefore, 
$$f'(U)\subset f'(U_t)\;\mbox{ and }\;f'(V)\subset f'(U_j).$$
Recall that  $f'(U),f'(V)$ are orthogonal $(k+1)$-dimensional subspaces.
This means that
$$\dim (f'(U_t)\cap f'(U_j))=1\mbox{ or } 2.$$
On the other hand, $U_{t}$ and $U_{j}$ are connected by 
a sequence of $j-t$ elements of ${\mathcal G}_{k+2}(H)$, where any two consecutive elements are adjacent
and $j-t\le k-1$;
therefore, 
$$\dim (f'(U_t)\cap f'(U_j))\ge k+2 -(j-t)\ge k+2-(k-1)=3$$
which gives a contradiction again. 

So, $f'(U)$ and $f'(V)$ both belong to ${\mathcal G}_{k+2}(H)$.
Since for every $U\in {\mathcal G}_{k+2}(H)$ there is  $V\in {\mathcal G}_{k+2}(H)$
such that $U,V$ are compatible and $\dim(U\cap V)=1$, the proof is complete.
\end{proof}

\subsection{The case $i>3$}
In this case, we have $m\ge i+1$ by assumption.
As in the previous subsection, we show that $f$ is adjacency preserving.

Suppose that $X,Y\in {\mathcal G}_{k}(H)$ are adjacent. Then $\dim (X+Y)=k+1$ and 
$$\dim(X+Y)^{\perp}=n-(k+1)=mk+i-k-1=(m-1)k +i-1.$$
Without loss of generality, we can assume that $m=i+1$.
In the case when $m-i-1>0$, we choose mutually orthogonal $k$-dimensional subspaces
$$X_{1},\dots,X_{m-i-1}\subset (X+Y)^{\perp}$$
(if $m-i-1=1$, then we take any $k$-dimensional subspace $X_1$ orthogonal to $X+Y$),
consider the subspaces
$$M= (X_{1}+\dots+X_{m-i-1})^{\perp}\;\mbox{ and }\; N=(f(X_{1})+\dots+f(X_{m-i-1}))^{\perp}$$
whose dimension is equal to $(i+1)k+i$ and observe that $f$ sends any $k$-dimensional subspace of $M$ to a $k$-dimensional subspace of $N$.

Let $m=i+1$.
Two $k$-dimensional subspaces of $H$  are adjacent if and only if 
their orthogonal complements are adjacent.
In particular, we have
$$\dim(X^{\perp}\cap Y^{\perp})=n-k-1=(i+1)k+i-k-1=ik +i-1.$$
Assume that $f(X)$ and $f(Y)$ are not adjacent.
Then their orthogonal complements also are not adjacent
and
$$\dim(f(X)^{\perp}\cap f(Y)^{\perp})<ik +i-1.$$
We set 
$$M_1=X^{\perp}\cap Y^{\perp}\;\mbox{ and }\;N_1=f(X)^{\perp}\cap f(Y)^{\perp}.$$
Then $f$ sends any $k$-dimensional subspace of $M_1$ to a $k$-dimensional subspace of $N_1$,
i.e. it induces an orthogonality preserving injection
$$f_{1}:{\mathcal G}_{k}(M_{1})\to {\mathcal G}_{k}(N_1),$$
where 
$$\dim N_{1}< \dim M_{1}=ik +i-1.$$
Note that $f_1$ is not adjacency preserving (otherwise, 
it is induced by a unitary or anti-unitary operator which contradicts the fact that $\dim N_1 < \dim M_1$).
Let us take any adjacent $U,V\in {\mathcal G}_{k}(M_1)$ 
such that $f(U), f(V)$ are not adjacent.
Consider the subspace
$$M_2=U^{\perp}\cap V^{\perp}\cap M_{1}$$
whose dimension is equal to
$$\dim M_1 - k-1=(i-1)k+i-2.$$
The map $f$ sends any $k$-dimensional subspace of $M_{2}$
to a $k$-dimensional subspace contained in
$$N_2=f(U)^{\perp}\cap f(V)^{\perp}\cap N_{1}.$$
Since $\dim N_{1}<\dim M_1$ and $f(U),f(V)$ are distinct non-adjacent $k$-dimensional subspaces of $N_1$,
we have $\dim N_{2}<\dim M_2$. 
As above, the restriction of $f$ to ${\mathcal G}_{k}(M_2)$ is not adjacency preserving.

Recursively, we establish that $f$ induces a sequence of maps 
$$f_{j}:{\mathcal G}_{k}(M_{j})\to {\mathcal G}_{k}(N_j),\;\;\;\;\; j=1,\dots,i-3,$$
where 
$$\dim N_{j}< \dim M_{j}=(i-j+1)k +i-j$$
and every $f_{j}$ is an orthogonality preserving injection, but it is not adjacency preserving. 
On the other hand, 
$$\dim M_{i-3}=4k+3$$ 
and $f_{j-3}$ can be considered as an orthogonality preserving injection of 
${\mathcal G}_{k}(M_{i-3})$ to ${\mathcal G}_{k}(M')$,
where $M'$ is a $(4k+3)$-dimensional complex Hilbert space containing $N_{i-3}$.
By the arguments from the previous subsection, $f_{j-3}$ is adjacency preserving.
We come to a contradiction which implies that $f(X)$ and $f(Y)$ are adjacent. 

\section{Proof of Corollary \ref{cor}}
We will use Faure-Fr\"{o}licher-Havlicek's version of the Fundamental Theorem of Projective Geometry
\cite{FaureFrolicher, Havlicek} to prove the statement for $k=1$.
Let $V$ and $V'$ be left vector spaces over division rings $R$ and $R'$, respectively.
The dimensions of the vector spaces are assumed to be not less than $3$.
Denote by ${\mathcal G}_{1}(V)$ and ${\mathcal G}_{1}(V')$ 
the sets of $1$-dimensional subspaces of $V$ and $V'$, respectively. 
A map $L:V\to V'$ is {\it semilinear} if 
$$L(x+y)=L(x)+L(y)$$
for all $x,y\in V$
and there is a non-zero homomorphism $\sigma:R\to R'$ such that 
$$L(ax)=\sigma(a)L(x)$$
for every $a\in R$ and $x\in V$.
Every semilinear injection $L:V\to V'$ induces a map between ${\mathcal G}_1(V)$ and ${\mathcal G}_{1}(V')$
which sends every $P\in{\mathcal G}_1(V)$ to the $1$-dimensional subspace containing $L(P)$.

\begin{theorem}[\cite{FaureFrolicher, Havlicek}]\label{FTPG}
Suppose that $f:{\mathcal G}_{1}(V)\to {\mathcal G}_{1}(V')$ is an injection 
satisfying the following conditions:
\begin{enumerate}
\item[(1)] if $X,Y,Z\in {\mathcal G}_{1}(V)$ and $Z\subset X+Y$, then
$f(Z)\subset f(X)+f(Y)$;
\item[(2)] there is no $2$-dimensional subspace of $V'$ which contains all elements 
from the image of $f$.
\end{enumerate}
Then $f$ is induced by a semilinear injection of $V$ to $V'$.
\end{theorem}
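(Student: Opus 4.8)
The plan is to reconstruct a semilinear injection $L\colon V\to V'$ from the incidence data carried by $f$, following the classical von Staudt coordinatization adapted to the non-surjective setting. The first step is to record the elementary consequences of the hypotheses. Every projective line of $V$ has at least three points, so for distinct $X,Y\in{\mathcal G}_1(V)$ the (three or more) collinear points of $\mathbb{P}(X+Y)$ have, by (1), images contained in $f(X)+f(Y)$; injectivity forces these images to be pairwise distinct, whence $\dim(f(X)+f(Y))=2$ and $f$ restricts to an injection of each projective line into a projective line. Iterating (1), the image of the plane spanned by any three points $X,Y,Z$ lies in $f(X)+f(Y)+f(Z)$.

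The technical core is to prove that $f$ preserves independence: three non-collinear points have non-collinear images. The starting observation is that if \emph{every} plane collapsed, i.e.\ had its image inside some line, then for any two distinct points $P,Q$ every image point would lie on the single line $f(P)+f(Q)$ (any third point is coplanar with $P,Q$, and $f(P)\neq f(Q)$), contradicting (2); hence at least one triangle has an independent image. I would then propagate non-degeneracy from this one plane to all of $\mathbb{P}(V)$ by a connectivity argument, moving between planes that share a common line and using that the image of a shared line already spans a full line in $V'$. The delicate point is to exclude the scenario in which a plane adjacent to a non-degenerate one is itself collapsed; I expect this case analysis, which must be run uniformly over possibly small division rings, to be the main obstacle.

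Granting independence preservation, I would coordinatize. Fix a basis $\{e_i\}_{i\in I}$ of $V$ and choose representative vectors $e_i'\in f(\langle e_i\rangle)$ that are linearly independent in $V'$. Using the sum points $\langle e_i+e_j\rangle$ of pairs, together with the sum points $\langle e_i+e_j+e_k\rangle$ of triples to guarantee compatibility, I would normalize the $e_i'$ so that $f(\langle e_i+e_j\rangle)=\langle e_i'+e_j'\rangle$ consistently; the compatibility of these normalizations is a finite check carried out inside each three-dimensional coordinate subspace. This produces, for every scalar $a\in R$, a well-defined value $\sigma(a)\in R'$ via $f(\langle e_i+ae_j\rangle)=\langle e_i'+\sigma(a)e_j'\rangle$, independent of the chosen pair $(i,j)$.

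Finally, I would verify that $\sigma\colon R\to R'$ is a nonzero ring homomorphism and that the rule $L\bigl(\sum_i a_ie_i\bigr):=\sum_i\sigma(a_i)e_i'$ defines a semilinear injection inducing $f$. Additivity and multiplicativity of $\sigma$ are read off from the standard von Staudt incidence configurations that realize $a+b$ and $ab$ as intersections of lines: since $f$ preserves collinearity and, by the second step, does not degenerate these configurations, their images are the corresponding configurations over $R'$, forcing $\sigma(a+b)=\sigma(a)+\sigma(b)$ and $\sigma(ab)=\sigma(a)\sigma(b)$, with $\sigma(1)=1$ so that $\sigma\neq0$. Semilinearity of $L$ and the identity $\langle L(v)\rangle=f(\langle v\rangle)$ then hold by construction, and the injectivity of $L$ on rays is exactly the injectivity of $f$.
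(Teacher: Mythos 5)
The paper offers no proof of this statement to compare against: it is imported verbatim, with citations, from Faure--Fr\"olicher and Havlicek, so your attempt has to be judged against the actual content of that theorem. Judged that way, it fails at the step you yourself identify as the technical core. The claim that an injection satisfying (1) and (2) must map non-collinear triples to non-collinear triples is \emph{false}, so no amount of case analysis or connectivity propagation can establish it. Concretely, take $V=\mathbb{Q}^{4}$ over $R=\mathbb{Q}$ and $V'=\mathbb{R}^{3}$ over $R'=\mathbb{R}$, and let $f$ be induced by the $\mathbb{Q}$-linear (hence semilinear, with $\sigma$ the inclusion $\mathbb{Q}\hookrightarrow\mathbb{R}$) injection
\[
L(a,b,c,d)=\bigl(a-c\sqrt{3},\; b+c\sqrt{2},\; d\bigr).
\]
Condition (1) holds because $L$ is semilinear. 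The induced map $f$ is injective: for $v,w\in\mathbb{Q}^{4}$ the three $2\times 2$ minors of the pair $L(v),L(w)$ equal $p_{12}+\sqrt{2}\,p_{13}+\sqrt{3}\,p_{23}$, $p_{14}-\sqrt{3}\,p_{34}$ and $p_{24}+\sqrt{2}\,p_{34}$, where $p_{st}=v_{s}w_{t}-v_{t}w_{s}\in\mathbb{Q}$; since $1,\sqrt{2},\sqrt{3}$ are linearly independent over $\mathbb{Q}$, all three minors vanish only if every $p_{st}=0$, i.e. only if $v,w$ are proportional. Condition (2) holds because the images of $\langle e_{1}\rangle,\langle e_{2}\rangle,\langle e_{4}\rangle$ span $\mathbb{R}^{3}$. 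Nevertheless the non-collinear points $\langle e_{1}\rangle,\langle e_{2}\rangle,\langle e_{3}\rangle$ have collinear images $\langle(1,0,0)\rangle$, $\langle(0,1,0)\rangle$, $\langle(-\sqrt{3},\sqrt{2},0)\rangle$: indeed the whole projective plane of $\langle e_{1},e_{2},e_{3}\rangle$ is injected into the single projective line $\mathbb{P}(\mathbb{R}^{2}\times\{0\})$. Note that this $f$ satisfies the conclusion of the theorem (it is induced by the semilinear injection $L$), so it refutes your intermediate lemma, not the theorem.

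This collapsing phenomenon is exactly what makes the theorem hard and is why the hypothesis is only that the image is not contained in a line, rather than anything resembling independence preservation: when $\sigma$ is not surjective, a semilinear injection can lower dimension while staying injective on points. Your subsequent coordinatization inherits the damage: in the example one cannot choose representatives $e_{i}'\in f(\langle e_{i}\rangle)$ that are linearly independent (four vectors in $\mathbb{R}^{3}$, three of them in a plane), and von Staudt's addition and multiplication configurations drawn inside a collapsed plane degenerate, so they cannot be used to read off $\sigma$. A correct proof, as in the cited papers, must construct $\sigma$ and $L$ from a well-chosen non-degenerate configuration and propagate through possibly degenerate planes using only the morphism property (1); the "delicate point" you deferred is not delicate but impossible, and the argument cannot be repaired along the lines you propose.
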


\begin{lemma}[Proposition 4.2 in \cite{Pankov-book}]\label{lemma0}
If an injective semilinear transformation of $H$ sends orthogonal vectors to orthogonal vectors, 
then it is a scalar multiple of a linear or conjugate-linear isometry.
\end{lemma}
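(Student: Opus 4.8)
The plan is to split the argument into two parts: first pin down the accompanying field homomorphism, then use it to recover the metric data. Write $L$ for the semilinear injection of $H$ and $\sigma:\mathbb{C}\to\mathbb{C}$ for its companion homomorphism, so that $L(x+y)=L(x)+L(y)$ and $L(ax)=\sigma(a)L(x)$. Since $\sigma$ is a non-zero homomorphism of the field $\mathbb{C}$, it is injective and satisfies $\sigma(1)=1$ (hence fixes $\mathbb{Q}$).

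The first and crucial step is to show that $\sigma$ is either the identity or complex conjugation. I would fix an orthonormal pair $e_1,e_2$ and set $u=L(e_1)$, $v=L(e_2)$; injectivity makes $u,v$ non-zero and the orthogonality hypothesis makes them orthogonal. For $a\neq 0$ the vectors $e_1+ae_2$ and $e_1-\bar a^{-1}e_2$ are orthogonal, so their images $u+\sigma(a)v$ and $u-\sigma(\bar a)^{-1}v$ must be orthogonal as well. Expanding the inner product and using $u\perp v$ yields
$$\frac{\sigma(a)}{\overline{\sigma(\bar a)}}=\frac{\|u\|^{2}}{\|v\|^{2}}$$
for every $a\neq 0$. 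Taking $a=1$ forces $\|u\|=\|v\|$ and leaves $\overline{\sigma(a)}=\sigma(\bar a)$ for all $a$; in particular $\sigma$ maps $\mathbb{R}$ into $\mathbb{R}$.

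The main obstacle lives exactly here: a priori $\sigma$ could be one of the many wild (discontinuous) automorphisms of $\mathbb{C}$, and the whole point is that orthogonality–preservation tames it. The decisive observation is that the restriction $\sigma|_{\mathbb{R}}$ is a field homomorphism of $\mathbb{R}$, and such a map is automatically the identity: it fixes $\mathbb{Q}$, and since every non-negative real is a square it carries non-negatives to non-negatives, hence is order-preserving, hence equals the identity by density of $\mathbb{Q}$. With $\sigma|_{\mathbb{R}}=\mathrm{id}$ and $\sigma(i)^{2}=\sigma(-1)=-1$ we obtain $\sigma(i)=\pm i$, so $\sigma$ is the identity (and $L$ is linear) or complex conjugation (and $L$ is conjugate-linear).

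The second step upgrades the ``equal length'' information to a global constant and reads off the isometry. Using $\dim H\ge 3$ I would first show that $\|L(g)\|$ is independent of the unit vector $g$: any two unit vectors $g,h$ admit a common unit vector $p$ orthogonal to both, and the first step applied to the orthonormal pairs $\{g,p\}$ and $\{h,p\}$ gives $\|L(g)\|=\|L(p)\|=\|L(h)\|$; call the common value $\sqrt c$, with $c>0$ by injectivity. Finally, for arbitrary $x,y$ I would expand $L(x),L(y)$ in the image of an orthonormal basis of the (at most two-dimensional) span of $x,y$. Since $L$ carries that basis to an orthogonal system of vectors of length $\sqrt c$ and $\sigma$ is identity or conjugation (both modulus-preserving), the computation gives $\langle L(x),L(y)\rangle=c\langle x,y\rangle$ in the linear case and $\langle L(x),L(y)\rangle=c\,\overline{\langle x,y\rangle}$ in the conjugate-linear case. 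Hence $c^{-1/2}L$ preserves the inner product (or its conjugate), i.e. it is a linear or conjugate-linear isometry, and $L$ is $\sqrt c$ times it, as required.
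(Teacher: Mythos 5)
Your proof is correct. The paper does not actually prove this lemma --- it is imported without proof from Proposition 4.2 of \cite{Pankov-book} --- so there is no internal argument to compare with; your argument is the standard proof of that proposition: forcing $\sigma(\bar a)=\overline{\sigma(a)}$ and $\|L(e_1)\|=\|L(e_2)\|$ from the orthogonality of $e_1+ae_2$ and $e_1-\bar a^{-1}e_2$, deducing that $\sigma$ is the identity or conjugation because the only field endomorphism of $\mathbb{R}$ is the identity, and then extracting a single scaling constant before expanding inner products in a two-dimensional span. The one hypothesis you invoke that is absent from the statement, $\dim H\ge 3$ (needed for the common orthogonal unit vector $p$ in your second step), is harmless: everywhere the lemma is used in the paper, $H$ is infinite-dimensional, and the cited Proposition 4.2 carries the same dimension assumption.
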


Suppose that $H$ is infinite-dimensional
and $f$ is an injective transformation of ${\mathcal G}_{k}(H)$ satisfying the condition (*) 
from Corollary \ref{cor}, i.e. for any orthogonal $X,Y\in {\mathcal G}_{k}(H)$ there  is a family 
$\{Z_{j}\}_{j\in J}$ of mutually orthogonal elements of ${\mathcal G}_{k}(H)$ such that 
each $Z_j$ is orthogonal to $X+Y$ and
$H'$ is the orthogonal sum of $f(X),f(Y)$ and all $f(Z_j)$.
Recall that $H'$ is the smallest closed subspace containing all elements from the image of $f$.

Suppose that $k=1$ and show that $f$ satisfies the conditions of Theorem \ref{FTPG}.
Since $f$ is orthogonality preserving, $H'$ is infinite-dimensional and (2) holds.
For any distinct $X,Y\in {\mathcal G}_{1}(H)$ there is $Y'\in {\mathcal G}_{1}(H)$
orthogonal to $X$ and such that $X+Y=X+Y'$.
By (*), there is a family 
$\{Z_{j}\}_{j\in J}$ of mutually orthogonal elements of ${\mathcal G}_{1}(H)$ such that 
each $Z_j$ is orthogonal to $X+Y'$ and
$H'$ is the orthogonal sum of $f(X),f(Y')$ and all $f(Z_j)$.
Since $Y\subset X+Y'$ is orthogonal to all $Z_j$, 
$f(Y)$ is orthogonal to all $f(Z_j)$.
This means that $f(Y)$ is contained in $f(X)+f(Y')$
(since $H'$ is  the orthogonal sum of $f(X),f(Y')$ and all $f(Z_j)$).
By the injectivity of $f$, we have
$$f(X)+f(Y)=f(X)+f(Y').$$
Similarly, for every $1$-dimensional subspace $Z$ contained in $X+Y=X+Y'$
we establish that $f(Z)\subset f(X)+f(Y)$, i.e. the condition (1) also is satisfied.
So, $f$ is induced by a semilinear injection and Lemma \ref{lemma0} gives the claim.

In the case when $k\ge 2$, it is sufficient to show that $f$ is adjacency preserving and apply Theorem \ref{P1}.
Suppose that $X,Y\in {\mathcal G}_{k}(H)$ are adjacent.
There exists $Y'\in {\mathcal G}_{k}(H)$ orthogonal to $X$ and such that 
$$X+Y\subset X+Y'.$$
Let $\{Z_{j}\}_{j\in J}$ be a family of mutually orthogonal elements of ${\mathcal G}_{k}(H)$ such that 
each $Z_j$ is orthogonal to $X+Y'$ and
$H'$ is the orthogonal sum of $f(X),f(Y')$ and all $f(Z_j)$. 
We fix $j_{0}\in J$ and consider the $(3k)$-dimensional subspaces 
$$M=X+Y'+Z_{j_{0}}\;\mbox{ and }\; N=f(X)+f(Y')+f(Z_{j_{0}}).$$
Every $k$-dimensional subspace $Z\subset M$ is orthofonal to all $Z_{j}$ with $j\ne j_{0}$.
Then $f(Z)$ is orthogonal  to all $f(Z_{j})$ with $j\ne j_{0}$,
i.e. $f(Z)$ is contained in $N$ (since $H'$ is the orthogonal sum of $N$ and all $f(Z_j)$ with $j\ne j_{0}$).
Therefore, $f$ induces an orthogonality preserving 
injection of ${\mathcal G}_{k}(M)$ to ${\mathcal G}_{k}(N)$. 
By Theorem \ref{theorem1}, this restriction is induced by a unitary or anti-unitary operator from $M$ to $N$;
in particular, it is adjacency preserving and $f(X),f(Y)$ are adjacent.

\subsection*{Acknowledgment}
The author is grateful to Hans Havlicek for useful remarks.

\end{document}